\newtheorem{theo}{Theorem}[section]
\theoremstyle{definition}
\newtheorem{dfn}[theo]{Definition}
\newtheorem{rem}[theo]{Remark}
\newtheorem{ex}[theo]{Example}
\newcommand{\Z}{\ensuremath{\mathbb{Z}}}
\newcommand{\R}{\ensuremath{\mathbb{R}}}
\newcommand{\C}{\ensuremath{\mathbb{C}}}
\newcommand{\M}{\ensuremath{\mathcal{M}}}
\newcommand{\Rs}{\ensuremath{\mathcal{R}}}
\newcommand{\vs}{\vspace{0.2cm}}
\DeclareMathOperator{\Aut}{Aut}
\DeclareMathOperator{\GL}{GL} 
\DeclareMathOperator{\SL}{SL} 
\DeclareMathOperator{\SO}{SO} 
\DeclareMathOperator{\PSL}{PSL}
\DeclareMathOperator{\UT}{UT}
\begin{document}

\author{Annalisa Conversano  } 
 
\title[Groups definable  in o-minimal structures: a diagram]{Groups definable  in
o-minimal structures: \\ various properties and a diagram}

\address{Massey University Auckland, New Zealand} 

\email{a.conversano@massey.ac.nz}

%\urladdr{http://www.math.uni-konstanz.de/~conversa/homeng.html} 

%\date{Research supported in part by the the Hausdorff Institute in Bonn.}

%

%\date{\today}

\maketitle
\vspace{-1.4cm}

\begin{abstract} 
We present a diagram surveying equivalence or strict implication for properties of different nature (algebraic, model theoretic, topological, etc.) about  groups definable in o-minimal structures. All results are well-known and an extensive bibliography is provided.
\end{abstract}

\thispagestyle{empty}

%\newpage

 %\hspace{-.5cm} 
 
%{\Huge \bf Some properties of definably connected groups in o-minimal structures}
% 

%Below is a diagram showing connections between several properties of a definably connected group $G$ definable in a (sufficiently saturated) o-minimal expansion of a real closed field $\mathcal{R}$. See \cite{Otero} for a nice introduction to the topic. 

%No group $G$ can belong to both boxes connected by a slanted double arrow. \\

%The smallest type-definable subgroup of bounded index is $G^{00}$ and the smallest invariant subgroup of bounded index is $G^{000}$. The quotient $G/G^{00}$ is a compact Lie group, when equipped with the logic topology \cite{}.\\
 
% \[
% \hspace{-1.5cm}
%\begin{array}{cccc}
%  & \quad \mathcal{N}(G):=\text{the maximal normal definable}  & \quad \bar G:=G/ \mathcal{N}(G) \quad &  \quad C:=\text{the maximal normal definable} \\
%&  \text{ torsion-free subgroup of }G & \quad &   \text{ compact connected subgroup of }\bar G
%\end{array}
%\]

%\thispagestyle{empty}

%\newsavebox{\boks}
%\savebox{\boks}{G \text{is definably compact}\linebreak G has fsg} 
% \vspace{.5cm} 
\[
\hspace{-2.7cm}
\scalebox{.75}{
\xymatrix{
 &  & \framebox{\begin{tabular}{c}$G$ is definably simple 
 \\[.2cm] 
 $(G, \cdot) \equiv$ simple Lie gr.  \end{tabular}}  \ar@{=>}[r]  \ar@{=>}[d]^{(1)}    & 
\framebox{\begin{tabular}{c}  $G$ is semisimple 
\end{tabular}}\ar@{=>}[d]^{(2)} \\
 \framebox{\begin{tabular}{c}
$G$ is  torsion-free\\[.2cm]
%$N$ \& $G/N$ are torsion-free\\
%for a (any) def $N \lhd G$\\[.2cm]
$E(G) = \pm 1$\\[.2cm]
$G$ is uniq. divisible \\[.2cm]
$G$ \text{is def. contractible}\\[.2cm]
$G$ has no non-trivial \\
def. compact subgroups\\
\end{tabular}
} \ar@{=>}[r]^{(1)}  
 & 
\framebox{\begin{tabular}{c} 
$G$ is definably \\ 
completely solvable \end{tabular}} \ar@{=>}[d]^{(3)} &
\framebox{\begin{tabular}{c} $G$ is centerless  \end{tabular}} \ar@{=>}[d]^{(4)}  \ar@{.}[r]^{(6)}_{(1)}
& \framebox{\begin{tabular}{c} $G$ is perfect  \end{tabular}} \ar@{=>}[d]^{(4)}   
 \\
%\framebox{$G$ \text{ is torsion-free}} \ar[d]  & 
%  & \framebox{$G$ \text{ is torsion-free}} \ar[d] \\
&   
\framebox{\begin{tabular}{c}
$G$ \text{ is solvable}\\[.2cm]
$[G ,G]$ \text{ is nilpotent}\\[.2cm]
$[G ,G]$ \text{ is definable}\\
\text{and torsion-free}  
\end{tabular}
} \ar@{=>}[d] ^-{(2)}   & 
 \framebox{\begin{tabular}{c} $G$ \text{ is linear}  \end{tabular}} \ar@{=>}[d]^--{(3)} \ar@{=>}[r]^{(3)} 
&
\framebox{\begin{tabular}{c} $[G, G]$ is definable   \end{tabular}}
\ar@{=>}[d]^--{\ (7)} 
\\ 
\framebox{\begin{tabular}{c}
$G$ is definably compact\\[.2cm] 
%$N$ \& $G/N$ are def. comp.\\
%for a (any) def $N \lhd G$\\[.2cm]
$G$ has fsg \\[.2cm]
%$G$ is comp.dom. by $G/G^{00}$ \\[.2cm]
$G/G^{00}\cong G(\mathbb R)$\\ [.2cm]
$\dim G = \dim G/G^{00} $\\[.2cm] 
$G$ has no non-trivial\\
torsion-free def. subgroups\\
\end{tabular}}\ar@{=>}[r]^-{(1)}  
& \framebox{
\begin{tabular}{c}
$\bar G$ is definably compact\\[.2cm]
$G$ is definably amenable\\[.2cm]
$G$ has a bounded orbit\\[.2cm]
$G$ has definably compact \\[.05cm]
Levi subgroups \\[.2cm]
$G/G^{00}\cong$ any maximal \\[.05cm]
compact subgr. of $G(\mathbb R)$\\[.2cm]
$G$ and $G/G^{00}$ have same \\[.05cm]
homotopy \& cohom. type\\[.2cm]
$G^{00}$ is torsion-free\\[.2cm]
$G$ has the strong\\[.05cm]
exactness property\\
\end{tabular}}  \ar@{=>}[r]^-{(4)}\ar@{=>}[dr]_-{(4)} \ar@{.}[ur]^-{(3)}_-{(4)} &    
\framebox{\begin{tabular}{c}
$G$ has a definable\\
Levi decomposition 
\end{tabular}
}\ar@{=>}[r]^{(7)}\ar@{.}[d]^{(7)} &
\framebox{\begin{tabular}{c}
$[\bar G,\bar G]$ is definable\\[.2cm]
$\bar G$ has a definable\\[.05cm]
Levi decomposition\\
\end{tabular}
}\ar@{=>}[r]^-{(8)}\ar@{.}[d]^{(8)} &
\framebox{\begin{tabular}{c}
$G^{00}=G^{000}$\\[.2cm]
$G$ has the almost \\[.05cm]
exactness property\\[.2cm]
$G/G^{00}$ is isog to $C/C^{00}$
\end{tabular}} \\
&  
%\framebox{\begin{tabular}{c}
%$(G, \cdot) \equiv (G/G^{00}, \cdot)$\\[.2cm]
%$G/Z(G)$ is def. comp.\\[.05cm]
%\& $Z(G)^0$ has torsion.
%\end{tabular}} 
&
\framebox{\begin{tabular}{c}
$G$ has the\\[.05cm]
exactness property\\
\end{tabular}
}\ar@{=>}[r]^-{(5)}
\ar@{}[u]^{(5)} &
\framebox{\begin{tabular}{c}
$G/G^{00}\cong C/C^{00}$\\[.2cm]
$G/G^{00}\cong$ the max.\\[.05cm]
normal compact \\[.05cm]
conn. sub. of $G(\mathbb R)$
\end{tabular}}\ar@{=>}[ur]_{(6)}\ar@{}[u]^{(6)}}} 
\]

\def\R{\mathbb R}
\def\Z{\mathbb Z}

%\vspace{0.5cm}
%
%{\bf Some counterexamples:}
%\newpage
%\noindent 
\section{Introduction}

Groups definable in o-minimal structures have been studied by many authors in the last 30 years and include algebraic groups over algebraically closed fields of characteristic 0, semialgebraic groups over real closed fields, important classes of real Lie groups such as abelian groups, compact groups and linear semisimple groups. See \cite{Otero} for a nice introduction to the topic.  

The previous diagram shows connections between several properties of a definably connected group $G$ definable in a (sufficiently saturated) o-minimal expansion \M\ of a real closed field $\mathcal{R}$ (although most results are known in arbitrary o-minimal structures -- or o-minimal expansions of groups).   \\

Notation in the diagram is as follows:
we denote by $\mathcal{N}(G)$ the maximal normal definable torsion-free subgroup of $G$, by $\bar{G}$ the quotient $G/ \mathcal{N}(G)$, and by $C$ the maximal normal definably connected definably compact subgroup of $\bar G$. Subgroups $\mathcal{N}(G)$ and $C$ are proved to exist in \cite[Proposition 2.6]{us1}. 

$E(G)$ denotes the \emph{o-minimal Euler characteristic} of $G$ (see Section 2).  

Assuming $G$ is definable with parameters over the reals, $G(\R)$
denotes the corresponding real Lie group (see \cite[Prop 2.5 \& Rem 2.6]{Pillay - groups}).

By a beautiful conjecture of Pillay from \cite{PC} (see \cite{PPC} for a survey about its solution), the quotient $G/G^{00}$ (of a definably connected group $G$ by the smallest type-definable subgroup of bounded index $G^{00}$) is a compact Lie group, when equipped with the logic topology \cite{BOPP}. Finally, we denote by $G^{000}$ the smallest subgroup of bounded index in $G$ which is $\Aut(\M)$-invariant.\\

Properties in each box are equivalent. An arrow indicates that a property in a box implies the properties in the other one.
The dotted lines show that there is no implication in either direction.  
All implications are strict. \\

The diagram is explained as follows:
Section 2 introduces the left-most column of torsion-free and definably compact groups. In Section 3 solvable groups $G$ and their quotient $\bar{G} = G/\mathcal{N}(G)$ are discussed. Section 4 covers the right part of the diagram. Finally, Section 5 provides counterexamples for the implications that do not hold.  \\

Further work on definable groups outside the scope of this diagram can be found in \cite{BBO19, Cartan, BO10, BPP, me3, El08, EHP18, EPR, Edmundo-Terzo, GPP, Nash, NIPII, J1, J2, MMT00, NPR, Alf, PY, T00}. In recent years, the investigation has been extended by several authors to the wider class  of locally definable groups. See, for instance, \cite{BEP19, BEM13, E06, EP12, EP13}. \\

\textbf{Aknowledgments.} This survey has been mostly prepared during the 
``Logic and Algorithms in Group Theory'' Program (September 3 - December 20, 2018) at the Hausdorff Institute in Bonn. Many thanks to the organizers for the kind invitation and the great scientific program.

%=====================
%\vs
\section{Torsion-free and definably compact groups}

%\noindent
The left side of the diagram shows classes of torsion-free and definably compact groups. Roughly speaking, torsion-free groups resemble triangular groups of upper triangular matrices, and definably compact groups are very closely related to closed subgroups of orthogonal groups. Definable torsion-free groups have been studied, for instance, in \cite{BO16, COS, PeSta} and definably compact groups, much more extensively, in \cite{BB12, B17, EMPRT, NIPI, Ma, OP09, Peterzil-Pillay}. \\

If $\mathcal{P}$ is a cell decomposition of a definable set $X$, \emph{the o-minimal Euler characteristic} $E(X)$ is defined as the number of even-dimensional cells in 
$\mathcal{P}$ minus the number of odd-dimensional cells in $\mathcal{P}$, and it does not depend on $\mathcal{P}$ (see \cite{Lou}, Chapter 4). 
Strzebonski proved in \cite[Prop 2.5]{Strzebonski} that a definable group $G$ is {\bf torsion-free} if and only if $E(G) = \pm 1$, and in  \cite[Prop 4.1]{Strzebonski} he deduced that such groups are uniquely divisible. By \cite[2.4 and 2.11]
{PeSta} torsion-free groups $G$ are also definably connected and solvable. In fact $G$ is definably completely solvable (or triangular) \cite[Theo 4.4]{COS}. Namely, $G$ contains a chain of normal (in $G$) definably connected subgroups 
\[
\{e\} = G_0 < G_1 < \dots < G_{n-1} < G_n = G
\]

\noindent
where $\dim G_i = i$. Note that since 1-dimensional definably connected groups are abelian \cite{Razenj}, definable groups with such a chain are solvable. Abelian definably compact groups are not, in general, definably completely solvable. See, for instance, Examples 5.1 and 5.2 in \cite{PS}. By \cite[Prop 2.5]{me1} and  \cite[Cor 5.7]{PeSta}, a definable group is definably contractible if and only if it is torsion-free. \\

In $\aleph_0$-saturated ordered structures, only sequences that are eventually constant converge, so the standard notion of compactness by sequences is not very useful in a model-theoretic context. Definable compactness has been introduced by Peterzil and Steinhorn in \cite[Def 1.1]{PS} as a better analogue to compactness.  A definable set $X$ is {\bf definably compact} if 
for every definable continuous function $f \colon (a, b) \to X$, the limits of $f(x)$, as $x$ tends to $a$ or to $b$, exist in $X$. 
%every definable curve in it is completable. 
Over any o-minimal structure, if $X$ has the topology induced by the order of the ambient structure, this is equivalent to say that $X$ is closed and bounded \cite[Theo 2.1]{PS}. Thus over the reals, this coincides with the usual notion of compactness. The equivalent condition using definable open coverings is given in \cite[Cor 2.3]{Peterzil-Pillay}.

%========check solvable groups!================
By \cite[Theo 8.1 \& Rem3 pg.588]{NIPI}, $G$ is definably compact if and only if  $G$ has \emph{fsg} (finitely satisfiable generics). That is, there is a global type $p(x)$ and a small model $\Rs_0$ such that for every $g \in G$ the left translate $gp = \{\varphi(x): \varphi(g^{-1}) \in p\}$ is finitely satisfiable in $\Rs_0$.

If $G$ is definably compact, then the o-minimal dimension of $G$ as a definable set coincide with the dimension of $G/G^{00}$ as a Lie group \cite[Theo 8.1]{NIPI} (see \cite{AA17} for another proof, and \cite{EMPRT} for the case of an arbitrary o-minimal structure). 
Moreover, when $G$ is defined over the reals, $G(\R)$  is Lie isomorphic to $G/G^{00}$ via the standard part map \cite{NIPI}. 

Conversely, if $G$ is not definably compact, then by \cite[Theo 1.2]{PS} $G$ contains infinite definable torsion-free subgroups $H$, for which $H = H^{00} \subset G^{00}$ \cite[2.4]{us2}, and so $\dim G/G^{00} < \dim G$. Moreover, when $G$ is defined over the reals, the Lie group $G(\R)$ contains the torsion-free closed subgroup $H(\R)$ (where $H$ is the torsion-free definable subgroup mentioned above). Since closed subgroups of compact groups are compact too, and compact Lie groups have torsion, it follows that in this case $G(\R)$ cannot be Lie isomorphic to the compact $G/G^{00}$.\\

Both classes of definably compact and torsion-free definable groups are closed by definable subgroups and definable quotients \cite[2.3]{PeSta}.  

\noindent
Since definably compact groups have torsion \cite{Edmundo-Otero, Pe02}, it follows that definably compact and torsion-free definable subgroups of a definable group always have trivial intersection.  If a definable group is not definably compact, then it contains a 1-dimensional torsion-free definable subgroup \cite[Theo 1.2]{PS}. Therefore, for both classes, the condition of being torsion-free or definably compact is equivalent to not having any non-trivial definable subgroup of the other class.

 Every definable linear group $G$ can be decomposed into a product $G = KH$ of a (maximal) definable torsion-free subgroup $H$ and a (maximal) definably compact subgroup $K$ \cite[Theo 4.1]{me2}. If $G$ is not linear, $G$ may not have maximal definably compact subgroups, but a similar decomposition holds where $K$ is abstractly compact. That is, it is isomorphic to a definably compact subgroup of $G/\mathcal{N}(G)$ \cite[Theo 1.5]{me2}. On the other hand, maximal definable torsion-free subgroups always exist \cite[Cor 2.4]{me1} and they are conjugate to each other \cite[Theo 3.26]{PeSta-mu}.  
%======================
%\vs
\section{Solvable groups}

%\noindent
Solvable definable groups have been first studied by Edmundo in \cite{Edmundo}. As observed in \cite[Prop 2.2]{us1}, the quotient $\bar{G}$ of a definable solvable group $G$ by its maximal normal definable torsion-free subgroup $\mathcal{N}(G)$ is definably compact. Moreover, if $G$ is not definably compact then $\mathcal{N}(G)$ is infinite (unlike semisimple groups that are not definably compact). We give below a direct proof of both facts (see \cite[Theo 2.5.1]{Conversano-thesis}):

%=============
\begin{theo}\label{solvquozcomp}
Let $G$ be a solvable definable group and let $N = \mathcal{N}(G)$ be its maximal normal definable torsion-free subgroup. If $G$ is not definably compact, then $N$ is infinite and $G/N$ is a definably compact group.  
\end{theo}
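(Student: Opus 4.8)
The plan is to deduce both assertions from a single structural claim: a solvable definably connected group with no non-trivial normal definable torsion-free subgroup is definably compact. Before proving it I would reduce to the definably connected case. Passing from $G$ to its identity component $G^0$ affects neither definable compactness (as $[G:G^0]$ is finite, and finite extensions of definably compact groups are definably compact) nor the subgroup $N$: indeed $\mathcal{N}(G^0)$, being the \emph{maximal} normal definable torsion-free subgroup of $G^0$, is characteristic in $G^0$, hence normalized by all of $G$, so $\mathcal{N}(G^0)=\mathcal{N}(G)=N$. Thus I may assume $G$ is solvable and definably connected.

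I would then isolate the following key claim: if $K$ is solvable, definably connected and $\mathcal{N}(K)=\{e\}$, then $K$ is definably compact. The proof splits according to whether $K$ is abelian. If $K$ is non-abelian, then $[K,K]$ is non-trivial, and for a definably connected solvable group $[K,K]$ is definable and torsion-free \cite{Edmundo}; being characteristic, it is a non-trivial normal definable torsion-free subgroup, contradicting $\mathcal{N}(K)=\{e\}$. Hence $K$ is abelian. If in addition $K$ were not definably compact, then by \cite[Theo 1.2]{PS} it would contain a $1$-dimensional torsion-free definable subgroup $H$; since $K$ is abelian, $H$ is normal, so $\mathcal{N}(K)\supseteq H\neq\{e\}$, again a contradiction. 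Therefore $K$ is definably compact.

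To finish, I would apply the claim to $\bar G=G/N$. An elementary extension argument shows $\mathcal{N}(\bar G)=\{e\}$: if $T=\mathcal{N}(\bar G)$ and $M$ is its preimage in $G$, then $M$ is normal and definable, $M/N=T$ is torsion-free, and $N$ is torsion-free, so $M$ is torsion-free (an element of $M$ of finite order maps to an element of finite, hence trivial, order in $T$, so it lies in $N$, and is therefore trivial); maximality of $N$ forces $M=N$, i.e.\ $T=\{e\}$. As $\bar G$ is solvable and definably connected, the claim yields that $\bar G$ is definably compact, which is the second assertion. For the first, if $N$ were finite then, being torsion-free and hence definably connected \cite{PeSta}, it would be trivial; then $\mathcal{N}(G)=\{e\}$ and the claim would make $G$ definably compact, against the hypothesis. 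Hence $N$ is infinite.

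The main obstacle is the non-abelian step of the key claim, which rests entirely on the non-trivial structural input that the derived subgroup of a definably connected solvable group is definable and torsion-free; this is precisely what rules out the delicate configuration of a non-trivial definably compact normal subgroup with torsion-free quotient and trivial $\mathcal{N}$. Everything else is bookkeeping with the lattice of normal definable subgroups together with the Peterzil--Steinhorn criterion \cite[Theo 1.2]{PS} for failure of definable compactness.
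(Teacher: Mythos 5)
Your reduction to the definably connected case, your abelian case, and your passage to $G/N$ (trivializing $\mathcal{N}(\bar G)$ via the preimage argument) are all correct, and the abelian case is exactly the paper's. The problem is the non-abelian step of your key claim, which carries the entire weight of the theorem and rests on a fact that your citation does not deliver: that $[K,K]$ is definable \emph{and torsion-free} for a definably connected solvable $K$. What \cite{Edmundo} actually proves is that $[K,K]$ is \emph{nilpotent} (\cite[Theo 6.9]{Edmundo}); definability is \cite[Theo 1.3]{BJO} (Edmundo's original definability argument had a gap). Torsion-freeness is in neither reference. Indeed, in this paper -- and in \cite[Prop 2.2]{us1}, where the theorem originates -- the torsion-freeness of $[G,G]$ is deduced \emph{from} Theorem 3.1: once one knows $\bar G = G/\mathcal{N}(G)$ is definably compact, \cite[Cor 5.4]{Peterzil-Starchenko1} forces $\bar G$ to be abelian, whence $[G,G]\subseteq \mathcal{N}(G)$ is torsion-free. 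So your argument runs the implication backwards: the only available proofs of your key input pass through the statement you are trying to prove, and the proposal is circular as written.

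This is not a repairable citation slip, because the fact you invoke is precisely the hard o-minimal content of the theorem. It fails for connected solvable Lie groups: the reduced Heisenberg group $G = \mathrm{UT}_3(\R)/\Z$ (quotient by a lattice in the center) is connected, nilpotent, has $[G,G]\cong S^1$ with torsion, and has \emph{no} non-trivial closed normal torsion-free subgroup while being non-compact -- so it violates both your key claim and the Lie analogue of the theorem simultaneously. Ruling such configurations out of the definable category is exactly what requires work, and nilpotence of $[K,K]$ (which \emph{is} in Edmundo) does not suffice: you would still face a non-abelian group whose derived subgroup could a priori be definably compact. The paper's proof does this work by induction on dimension: it chooses a normal definable $S$ with $G/S$ infinite abelian (definability via \cite[1.17]{PPSI}), splits into cases according to whether $S$ is definably compact, and in the compact case uses the definable splitting of extensions of torsion-free groups by definably compact groups (\cite[Lemma 2.3]{me1}) together with a $3\times 3$-lemma diagram chase to build the maximal torsion-free subgroup by hand. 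To salvage your approach you would need an independent proof that $[K,K]$ is torsion-free, which is essentially equivalent to (and no easier than) the theorem itself.
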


\begin{proof}
Because $N \subseteq G^0$  and $G/G^0$ is finite (so definably compact), we can suppose $G$ is definably connected.
We proceed by induction on $n = \dim G$. The case $n = 1$ is obvious, so let $\dim G = n > 1$.

If $G$ is abelian, the theorem can be extracted from \cite[2.6]{PeSta}. The argument is that if $G/N$ is not definably compact, then by \cite[Theo 1.2]{PS} there is a definable 1-dimensional torsion-free subgroup $H$ in $G/N$,
and the pull-back of $H$ in $G$ is a definable torsion-free subgroup of $G$, in contradiction with the maximality of $N$.  

Let $G$ be now non-abelian.
Since $G$ is solvable and definably connected, there is a normal solvable definable subgroup $S < G$
such that $G/S$ is abelian and infinite (definability of $S$ follows from \cite[1.17]{PPSI}). We distinguish the cases where
$S$ is definably compact and where $S$ is not definably compact.

\begin{itemize}

\item  If $S$ is definably compact then $G/S$ is not. By the abelian case,
the maximal normal definable torsion-free subgroup $N_1$ of $G/S$ is infinite
and $(G/S)/N_1$ is definably compact. If $\pi \colon G \to G/S$ is the canonical projection, 
let $N^{\prime} = \pi^{-1}(N_1)$. By \cite[Lemma 2.3]{me1} the definable exact sequence
\[
1\ \longrightarrow\ S \ \stackrel{i}{\longrightarrow}\ N^{\prime} \stackrel{\pi}{\longrightarrow}\ N_1\ \longrightarrow\ 1  
\]

\vs \noindent
splits definably in a direct product, thus $G$ contains a definable subgroup $N$ definably isomorphic to $N_1$ such that $N^{\prime} = S \times N$. Since $S \cong N^{\prime}/N$ which is definably compact, it follows that $N$ is the maximal normal definable torsion-free subgroup of $N^{\prime}$, and  it is normal in $G$ as well.
% (Lemma \ref{tormaxnor}). 

To show that $G/N$ is definably compact, it is enough to provide a normal
definable subgroup of $G/N$ which is definably compact, such that quotienting by it
we obtain a definably compact group. One such subgroup is $N^{\prime}/N$ which is definably isomorphic
to $S$, and the quotient  
%This is because $N^{\prime}/N \cong S$ and so it is a definably compact subgroup of $G/N$, and the quotient 
%$(G/N)/(N^{\prime}/N) \cong G/N^{\prime}$ which is definably compact as well: it 
$(G/N)/(N^{\prime}/N)$ is definably isomorphic to $(G/S)/N_1$, as the following diagram shows by ``the $3 \times 3$ lemma''.%: see Mac Lane homology.

\[
\xymatrix{
& & 1 \ar[d] & 1 \ar[d] & \\
%1 \ar[r] & S \ar[r] & \mathop{N^{\prime}}\limits^{pp} \ar[r] \ar[d]& N_1 \ar[r] \ar[d] & 1\\
1 \ar[r] & S \ar[r] & N^{\prime} \ar[r] \ar[d]& N_1 \ar[r] \ar[d] & 1\\
1 \ar[r] & S \ar[r] \ar@{<->}[u] & G \ar[r] \ar[d] & G/S \ar[r] \ar[d] & 1\\
& & G/N^{\prime} \ar@{<->}[r] \ar[d] & (G/S)/N_1 \ar[d] & \\
& & 1 & 1 & 
 }
\]

\vs
Thus $N$ is the maximal normal definable torsion-free subgroup of $G$, it is infinite and the theorem is proved for the case where $S$ is definably compact.  \\

%If $S^{\prime}$ is the image of $S$ in $G/N$ by the canonical projection, then $(G/N)/S^{\prime}$ is definably isomorphic to $(G/S)/N_1$ definably compact and therefore, again by \ref{quocomp}, $N$ is the maximal normal definable torsion-free subgroup of $G$ and $G/N$ is definably compact because $S^{\prime}$ is definably isomorphic to $S$ which is definably compact and $(G/N)/S^{\prime}$ is definably compact.\\

\item If $S$ is not definably compact, then by induction the maximal
normal definable torsion-free subgroup $N_1$ of $S$ is infinite
(possibly $N_1 = S$) and $S/N_1$ is definably compact. 
Note that $N_1$ is normal in $G$ as well.
% showed in \ref{tormaxnor}. 
If $G/N_1$ is definably compact then $N_1$ is the maximal normal definable torsion-free subgroup of $G$
and we are done. Otherwise, again by induction, its infinite maximal normal definable torsion-free subgroup
$N_2$ is such that $(G/N_1)/N_2$ is
definably compact. 

Let $N$ be the pull-back in $G$ of $N_2$. Note that $N$
%If $\pi \colon G \to G/N_1$ is the canonical
%projection, then $N = \pi^{-1}(N_2) < G$ 
is torsion-free and $G/N$ is definably isomorphic to $(G/N_1)/N_2$ which is definably compact.
Hence $N$ is the maximal normal definable torsion-free subgroup
of $G$ and it is infinite, since it contains $N_1$.
\end{itemize} \vspace{-.4cm}
\end{proof}

\noindent 
While any connected solvable real Lie group splits into a product of 1-dimensional connected subgroups \cite[Lem 3.6]{Iwasawa}, definable solvable groups with torsion  are not, in general, definably completely solvable. See Example 5.3.  \\

\noindent
If $G$ is solvable, then $[G,G]$ is nilpotent by \cite[Theo 6.9]{Edmundo}. Since $G/[G,G]$ is abelian for any group $G$, the converse is obvious.\\

\noindent
If $G$ is solvable, then $[G, G]$ is definable by \cite[Theo 1.3]{BJO}. Moreover, definably connected definably compact solvable groups are abelian by \cite[Cor 5.4]{Peterzil-Starchenko1}.  Therefore, if $G$ is solvable, then $\bar{G}$ is abelian and $[G, G] \subseteq \mathcal{N}(G)$ is torsion-free. Conversely, if $[G, G]$ is definable and torsion-free, then it is solvable by \cite[2.11]{PeSta}, and $G$ is solvable as well. The condition of $[G, G]$ being definable is necessary to conclude that $G$ is solvable, since $\widetilde{\SL}_2(\R)$ is torsion-free (see Example 5.7 where $[G, G] = \widetilde{\SL}_2(\R)$). \\

 %================================
%\vs
\section{Definable amenability, exactness, and connected components}

Recall that $G$ is said to be \emph{definably amenable} if it has a left invariant Keisler measure and $G$ \emph{has bounded orbit} if there is some complete type $p \in S_G(\mathcal{M}) $ whose stabilizer $Stab(p) = \{g \in G : gp = p\}$ has bounded index in $G$. In \cite{NIPI} groups with $fsg$ in complete NIP theories are shown to be both definably amenable and with bounded orbit, by lifting the Haar measure of the compact Lie group $G/G^{00}$ to a left invariant Keisler measure on $G$, making use of a global generic type $p$, whose stabilizer is $G^{00}$. The two classes of groups are indeed  shown to be the same in the o-minimal context \cite[Cor 4.12]{us1}, and to coincide with the class of groups $G$ such that $\bar{G} = G/\mathcal{N}(G)$ is definably compact \cite[Prop 4.6]{us1}. In short, if $\bar{G}$ is definably compact, then $G$ is definably amenable because    torsion-free and definably compact groups are definably amenable. Conversely, if  $\bar{G}$ is not definably compact, then there is a definable quotient of $G$ that is a definably simple not definably compact group, and such groups are not definably amenable (such as $\PSL_2(\R)$ \cite[Rem 5.2]{NIPI}).

Before discussing the remaining equivalent conditions for $\bar{G}$ to be definably compact, let us consider the upper right part of the diagram. \\

A non-abelian definable group is said to be {\bf definably simple} if it does not have any non-trivial normal definable subgroup. By \cite[Theo 5.1]{PPSIII}, definably simple groups are exactly the definable groups that are elementarily equivalent to a (non-abelian) simple real Lie group.

As the center $Z(G)$ is a normal definable subgroup of any definable group $G$, it follows that definably simple groups are centerless. \\

Given any definable group $G$, the quotient  $G/Z(G)$ can be definably embedded  in some $\GL_n(\Rs)$ through the adjoint representation \cite[Cor 3.3]{OPP96}. Therefore {\bf centerless groups} are linearizable.\\

%On the opposite side of solvable groups are semisimple groups. 
An infinite definable group is said to be {\bf semisimple} if it does not have any infinite abelian (or, equivalently, solvable) -- definable or not -- normal subgroup.  Semisimple definable groups have been studied by Peterzil, Pillay and Starchenko in \cite{PPSI, PPSII}. They prove that given a semisimple group $G$ definable in an arbitrary o-minimal structure \M, the quotient by the center $G/Z(G)$ is a direct product of definably simple groups $H_i$, and each $H_i$ is definably isomorphic to a definable subgroup of $\GL_n(\Rs_i)$, for some real closed field $\Rs_i$ definable in \M. \\

It is well-known that every definable group $G$ has a maximal normal definably connected  solvable subgroup $R$ (called \emph{the solvable radical}) and the quotient $G/R$ is a semisimple definable group. Since a proof does not appear in the literature, as far as we know, we add it below together with the proof of the existence of a maximal solvable subgroup:

\begin{rem}
 Let $G$ be a definably connected definable group. Then $G$ has a unique normal solvable definably connected subgroup $R$ such that $G/R$ is trivial or semisimple. The subgroup $R$ is the maximal normal solvable definably connected subgroup of $G$. If $G/R$ is semisimple and $\pi \colon G \to G/R$ is the canonical projection, then $\pi^{-1}(Z(G/R))$ is the maximal normal solvable subgroup of $G$.
 \end{rem}
 
 \begin{proof}
 By induction on $n = \dim G$. If $G$ is not semisimple, let $A < G$ be an infinite normal definable subgroup. If $G/A$ is semisimple, take $R = A^0$. Otherwise, by induction there is a normal solvable definably connected subgroup $S < G/A$ such that $(G/A)/S$ is semisimple. Then take $R$ to be the definably connected component of the identity of the pre-image of $S$ in $G$.
 
 If $S$ is another normal solvable definably connected subgroup of $G$, then $RS/R$ is a normal solvable definably connected subgroup of the semisimple group $G/R$. Therefore $RS = R$ and $S \subseteq R$.
 
If $S \subsetneq R$, then $\dim S < \dim R$  and $R/S$ is an infinite solvable definable subgroup of $G/S$. Therefore $G/S$ cannot be semisimple and $R$ is unique.
 
 Let $H = \pi^{-1}(Z(G/R))$. As mentioned above, $G/H$ is a direct product of definably simple groups and does not contain any solvable (definable or not) normal subgroup. Therefore $H$ is the maximal normal solvable subgroup of $G$.
 \end{proof}

Semisimple groups $G$ are {\bf perfect} by \cite[3.1]{HPP}. That is, $G$ is equal to its commutator subgroup $[G, G]$ or, equivalently, $G$ does not have any proper abelian quotient.

 By \cite[Theo 4.5]{PPSIII}, {\bf linear} groups $G$ have a definable Levi decomposition. That is, $G$ contains (maximal) semisimple definable subgroups $S$ (all conjugate) such that $G = RS$, where $R$ is the solvable radical. See \cite[Theo 1.1]{us3} for a Levi decomposition of an arbitrary definably connected group $G$, where $S$ is in general a countable union of definable sets.  
 
Very recently Baro proved in \cite[Theo 3.1]{Baro19} that the commutator subgroup of a linear group is definable. In general, this is not the case. See Examples 5.7 and 5.8, where the commutator subgroup of $G$ is isomorphic to the universal cover of $\rm SL_2(\R)$. These are also examples of groups without a definable Levi decomposition. Another definable group with no definable Levi subgroups is Example 5.9, whereas its commutator subgroup is definable. On the other hand, we expect that definability of Levi subgroups implies definability of the commutator subgroup.

\begin{rem} \label{remark}
If $\bar{G}$ is definably compact, then $G$ has a definable Levi decomposition.  
\end{rem}

\begin{proof}
If $\bar{G}$ is definably compact, then by \cite[Cor 6.4]{HPP}, $\bar{G} = Z(\bar{G}) \cdot [\bar{G}, \bar{G}]$ is a definable Levi decomposition of $\bar{G}$. Let $H$ be the pull-back of $[\bar{G}, \bar{G}]$ in $G$. So $H$ is a definable extension of a definably compact semisimple group by a definable torsion group $\mathcal{N}(G)$. The extension splits definably by \cite[Prop 5.1]{us3} and $H = \mathcal{N}(G) \rtimes S$, for some definably compact semisimple definable subgroup $S$. As $G/H \subset Z(\bar{G})$ is abelian, then clearly $S$ is a definable Levi subgroup of $G$.
 \end{proof}

If $[G, G]$ {\bf is definable}, then clearly $[\bar{G}, \bar{G}]$ {\bf is definable} as well. 

\begin{rem}
$[\bar{G}, \bar{G}]$ is definable  if and only if $\bar{G}$ has a definable Levi decomposition. 
\end{rem}

\begin{proof}
By Theorem \ref{solvquozcomp}, $\mathcal{N}(\bar{G}) = \{e\}$ and the solvable radical $\bar{R}$ of $\bar{G}$ is definably compact. By  \cite[Theo 4.4]{Peterzil-Starchenko1} $\bar{R}$ is abelian. To  see that 
$\bar{R}$ is central in $\bar{G}$, let $s \colon \bar{G}/\bar{R} \to \bar{G}$ be a definable section of the canonical projection $\pi \colon \bar{G} \to \bar{G}/\bar{R}$.

For every $g \in \bar{G}$, the conjugation map $f_g \colon \bar{R} \to \bar{R}$ mapping $a \mapsto gag^{-1}$ is a definable automorphisms of $\bar{R}$, so there is a homomorphism $\Phi \colon \bar{G} \to \Aut(\bar{R})$, given by $g \mapsto f_g$ such that $\bar{R} \subseteq \ker \Phi$, being $\bar{R}$ abelian. Thus $\Phi $ induces the definable homomorphism
\begin{align*}
\varphi \colon & \bar{G}/\bar{R} \longrightarrow \Aut (\bar{R}) \\
& \ \  x \mapsto (a \mapsto s(x)a s(x)^{-1})  
\end{align*}
which does not depend on the choice of the section $s$. Since $\bar{R}$ has no definable families of definable automorphisms by  \cite[Cor 5.3]{Peterzil-Starchenko1}, it follows that 
 %Let $\varphi \colon G/N \to \Aut (N)$ be a definable homomorphism as in the proof of Proposition \ref{compact-torus-central}. 
  $\varphi(\bar{G}/\bar{R}) = \{e\}$, and so $\bar{R} \subseteq Z(\bar{G})$.   
%  (A different proof of the more general fact that any normal abstract torus is central in a definably connected group is given in \cite[?]{me-JC}.)
%  
By \cite[Lem 3.2]{us3} $\bar{G} = Z(\bar{G}) [\bar{G}, \bar{G}]$  and $[\bar{G}, \bar{G}]$ coincides with the (unique) Levi subgroup of $\bar{G}$. Hence $[\bar{G}, \bar{G}]$ is definable if and only if $\bar{G}$ has a definable Levi decomposition. 
\end{proof}

Finally, by \cite[Prop 2.6]{us2}, whenever $\bar{G}$ has a definable Levi decomposition, then $G^{00} = G^{000}$.\\

We now go back to the class of groups $G$ such that $\bar{G}$ is definably compact.
 In Remark \ref{remark} we observed that $G$ has a definable Levi decomposition $G = RS$, and clearly \emph{Levi subgroups $S$ are definably compact}.
 
 Conversely, if $G = RS$ and $S$ is definably compact, then $\bar{G}$ must be definably compact, as $R/\mathcal{N}(G)$ is definably compact by Theorem \ref{solvquozcomp}.\\

When $G$ is defined over the reals, then $\bar{G}$ is definably compact if and only if $G/G^{00}$ is isomorphic to a maximal compact subgroup of $G(\R)$ \cite[Prop 2.10]{us2}.  Moreover, $\bar{G}$ definably compact is also equivalent for $G^{00}$ to be torsion-free and
for $G$ and $G/G^{00}$ to have same homotopy and cohomology types \cite{Conversano-thesis}.
 A last equivalent condition regards exactness of sequences. Given an exact sequence of definably connected groups
\[
1 \to H \to G \to Q \to 1
\] 
the induced sequences $1 \to H^{00} \to G^{00} \to Q^{00} \to 1$ and $1 \to H^{000} \to G^{000} \to Q^{000} \to 1$ turn out to be exact if and only if $H \cap G^{00} = H^{00}$ and $H \cap G^{000} = H^{000}$ (see \cite[Lemma 2.1]{us2}). So we have the following definition from \cite{us2}:

\begin{dfn}
Let $G$ be a definable group. We say that

\begin{enumerate}
\item $G$ has \emph{the almost exactness property} if for every normal definable subgroup $H$ of $G$, $H^{00}$ has finite index in $G^{00} \cap H$.

\item $G$ has \emph{the exactness property} if for every normal definable subgroup $H$ of $G$, $H^{00} = G^{00} \cap H$.

\item $G$ has \emph{the strong exactness property} if for every definable subgroup $H$ of $G$, $H^{00} = G^{00} \cap H$.
\end{enumerate}
\end{dfn}

When $G$ is definably compact, then $G$ {\bf has the strong exactness property} by \cite{B07}. By \cite[Theo 4.11]{us2} $G$ has the strong exactness property if and only if $\bar{G}$ is definably compact. \\

If $G$ {\bf has the exactness property}, then $G/G^{00}$ is isomorphic to $C/C^{00}$ by \cite[Prop 4.10]{us2}. If $G$ is defined over the reals, this is equivalent for $G/G^{00}$ to be isomorphic to the maximal normal compact connected subgroup of $G(\R)$ \cite{us2}. \\

Finally, $G$ {\bf has the almost exactness property} if and only if $G^{00} = G^{000}$ 
(\cite[Theo 4.4]{us2}) if and only if $G/G^{00}$ is isogenous to $C/C^{00}$ (\cite[Rem 4.7]{us2}).

%===========================
%\vs
\section{Counterexamples} 

In this last section we describe 9 semialgebraic groups proving that all implications in the diagram are strict.  The enumeration coincides with the number on the corresponding arrows -- or dotted lines -- in the diagram.
%\vs
%\begin{enumerate}
%\item $ \mathcal{R}^2 \rtimes {\rm SO}_2$ \vs
%\item $ \mathcal{R}^3 \rtimes {\rm SO}_3$ \vs
%\item $ {\rm SO}_2 \times {\rm SL}_2$ \vs
%\item $ \mathcal{R} \times_{\Z} \widetilde {\rm SL}_2$  \vs
%\item $ {\rm SO}_2 \times_{\Z} \widetilde {\rm SL}_2,\; \Z \cap {\rm SO}_2^{00} = \{e\}$   \vs
%\item $ {\rm SO}_2 \times_{\Z} \widetilde {\rm SL}_2,\; \Z\subseteq {\rm SO}_2^{00}$   \vs
%\item $ {\rm SU}_2 \times_{\{\pm I\}}{\rm SL}_2 $ \vs
%\item $ {\rm SO}_2 \times_{\{\pm I\}} {\rm SL}_2$ \vs
%
%\end{enumerate}
%--------1----------------
\begin{ex}{\bf :} \fbox{$\mathcal{R}^2 \rtimes {\rm SO}_2(\Rs)$} 

\vspace{.1cm}
${\rm SO}_2(\mathcal{R})$ denotes the special group of othogonal matrices $2 \times 2$ with coefficients in $\mathcal{R}$. That is,
\[
{\rm SO}_2(\mathcal{R}) = \left \{ \begin{pmatrix}
a & -b \\
b & a
\end{pmatrix} : a, b \in \Rs ,\ a^2 + b^2 = 1
\right\}
\]

${\rm SO}_2(\mathcal{R})$ is an abelian 1-dimensional definably compact group (abstractly) isomorphic to the unit circle.

Let now $G$ be the following group of matrices $3 \times 3$:

\[
G = \left \{ 
\begin{pmatrix}
a & -b & x \\
b & a & y \\
0 & 0 & 1
\end{pmatrix} : a, b, x, y \in \Rs,  a^2 + b^2 = 1
\right \}
\]

The group $G$ is definably isomorphic to the semidirect product $\mathcal{R}^2 \rtimes {\rm SO}_2(\Rs)$ where the action is the matrix multiplication of ${\rm SO}_2(\Rs)$ on ($\Rs^2, +)$.

Note that $\mathcal{N}(G) \cong \Rs^2$ and $G/\mathcal{N}(G) \cong {\rm SO}_2(\Rs)$ is definably compact, but $G$ is not definably compact nor torsion-free.

Moreover, $G$ is centerless but it is not perfect nor definably simple.
\end{ex}

%-----------------------2------------------
\vspace{.2cm}
\begin{ex}{\bf :} \fbox{$\mathcal{R}^3 \rtimes {\rm SO}_3(\Rs)$} 

\vspace{.1cm}
A similar group $G$ of matrices $4 \times 4$ can be obtained considering the action given by the matrix multiplication of ${\rm SO}_3(\Rs)$, the special orthogonal group of matrices $3 \times 3$ with coefficients in \Rs, on $(\Rs^3, +)$. 

Note that $\mathcal{N}(G) \cong \Rs^3$ and $G/\mathcal{N}(G) \cong {\rm SO}_3(\Rs)$ is a definably compact definably simple group. Thus $G$ is not solvable.

Moreover, $G$ is a perfect group that is not semisimple.
\end{ex}

%-----------------old 9 -----new 3--------------------
\vspace{.2cm}
\begin{ex} \label{ps99}{\bf :} \fbox{$n$-dimensional torus}

\vspace{.1cm}
This is Example 5.2 in \cite{PS}. Given $\mathcal{B} = \{v_1, \dots, v_n\}$ a set of linearly independent vectors in $\R^n$, let $L = v_1\Z+ \dots + v_n\Z$ be the integral lattice generated by $\mathcal{B}$, and $E$ the equivalence on $\R^n$ induced by $L$. That is, for each $a, b \in \R^n$
\[
a\, E\, b \quad \Longleftrightarrow \quad a - b \in L
\]

\noindent
For every bounded box $D$ containing the fundamental parallelogram of $L$, there is a finite sublattice $L' \subset L$ such that for all $a, b \in D$ we have
\[
a\, E\, b \quad \Longleftrightarrow \quad a - b \in L'
\]
Thus, even though the quotient group $(\R^n, +)/L$ is not definable, it is abstractly isomorphic to the definable group $G = (S, \oplus)$, where $S \subset D$ is a definable $n$-dimensional set of representatives of $(\R^n, +)/L$ and the group operation $\oplus$ on $S$ is defined as
\[
x \oplus y = z \quad \Longleftrightarrow \quad (x + y) \ E\ z 
\]

\vspace{.1cm}
Peterzil and Steinhorn prove in \cite{PS} that $\mathcal{B}$ can be chosen so that the proper definable subgroups of $G$ are all finite. When this is the case and $n > 1$, $G$ is an example of an abelian group that is not definably completely solvable.

Moreover $G$ is definably compact, but it is not definably isomorphic to a definable subgroup of the general linear group, as abelian definably compact linear groups split in a direct product of 1-dimensional definable subgroups by \cite[Lem 3.9]{PPSIII}. 
\end{ex}

%---------------------old 3---new-4-----------------
\vspace{.2cm}
\begin{ex}{\bf :} \fbox{$ {\rm SO}_2(\Rs) \times {\rm SL}_2(\Rs)$} 

\vspace{.1cm}
${\rm SL}_2(\Rs)$ denotes the special linear group $2 \times 2$, that is the group of matrices with coefficients in \Rs\ with determinant equal to 1. It is a semisimple definably connected group that is not definably compact.

Let $G$ be the direct product of ${\rm SO}_2 (\Rs)$ by ${\rm SL}_2(\Rs)$. The group $G$ is linear and is not centerless. Moreover $G$ has a definable Levi decomposition (the unique Levi subgroup is ${\rm SL}_2$) and $\bar{G} = G $ is not definably compact. Its commutator subgroup $[G, G]$ is again ${\rm SL}_2(\Rs)$, so it is definable, but $G$ is not perfect. Finally, note that $G^{00} = {\rm SO}_2(\Rs)^{00} \times {\rm SL}_2(\Rs)$, so $G$ has the exactness property by \cite[4.3]{us2}.
\end{ex}

%--------------------------5----------------
\vspace{.2cm}
\begin{ex}{\bf :} \fbox{$ {\rm SO}_2(\Rs) \times_{\{\pm I\}}{\rm SL}_2(\Rs)$}

\vspace{.2cm}
Let now consider the group $G$ obtained from ${\rm SO}_2 (\Rs) \times {\rm SL}_2(\Rs)$ above by identifying the common central subgroup $H = \{I, -I\}$. A way to define $G$ is the following:

Fixed a definable choice of representatives $s \colon \PSL_2(\Rs) \to \SL_2(\Rs)$ (for instance, $s(A)$ the matrix $[a_{ij}]$ such that $a_{11} >~0$ or $a_{11} = 0$ and $a_{12} > 0$), let $G = \SO_2(\Rs) \times \PSL_2(\Rs)$ be with the group operation given by

\[
(X_1, A_1) \ast (X_2, A_2) = 
\begin{cases}
\: \: (X_1X_2,\ A_1A_2) & \text{if $s(A_1A_2) = s(A_1)s(A_2)$,}\\
(-X_1X_2,\ A_1A_2) & \text{otherwise}.
\end{cases}
\]

\vs%\noindent 
$G$ contains normal definable subgroups $R = \SO_2(\Rs) \times \{I\}$  and $S = \{\pm I\} \times \PSL_2(\Rs)$ ($S$ is definably isomorphic to $\SL_2(\Rs)$), such that $RS = G$ and $R \cap S = \{(I, I), (-I, I)\}$. Note that 
$R^{00} = \SO_2(\Rs)^{00} \times \{I\}, 
G^{00}  = \pm \SO_2(\Rs)^{00} \times \PSL_2(\Rs)$, and
$G^{00} \cap R = \pm \SO_2(\Rs)^{00} \times \{I\}$. Therefore $R^{00}$ is properly contained in $G^{00} \cap R$ and $G$ does not have the exactness property. Moreover $G/G^{00} \cong \SO_2(\Rs)/\SO_2(\Rs)^{00} \cong C/C^{00}$.
\end{ex}

%-------------------------6-----------------da scambiare con 7--------------------
\vspace{.2cm}
\begin{ex}{\bf :} \fbox{$ {\rm SU}_2(\mathcal{K})  \times_{\{\pm I\}}{\rm SL}_2(\Rs) $}

\vspace{.1cm}
Denoted by $\mathcal{K} = \Rs(\sqrt{-1})$ the algebraic closure of the real closed field \Rs, every semialgebraic subgroup of the general linear group ${\rm GL}_n(\mathcal{K})$ can be viewed as a definable subgroup of ${\rm GL}_{2n}(\Rs)$. One such group is the special unitary group:

\[
{\rm SU}_n(\mathcal{K}) = \{X \in {\rm GL}_n(\mathcal{K}): X \bar{X}^T = I, \  \det X = 1 \}
\]

\vspace{0.2cm}
When $n = 2$, ${\rm SU}_2(\mathcal{K})$ is the universal cover of ${\rm SO}_3(\Rs)$ wih kernel $\{\pm I\}$, therefore it is a semisimple definably compact group. 

Let now $G$ be the amalgamated direct product of ${\rm SU}_2(\mathcal{K})$ and ${\rm SL}_2(\Rs)$ over the common central subgroup $\{\pm I\}$, obtained definably as the previous example. Note that $\mathcal{N}(G) = \{e\}$ and the maximal normal definably compact definably connected subgroup of $\bar{G} = G$ is $C = {\rm SU}_2(\mathcal{K}) $. Therefore $G/G^{00} \cong {\rm SO}_3(\R)$,  while $C/C^{00} \cong {\rm SU}_2(\C)$.
Moreover, $G$ is a perfect group that is not centerless.
\end{ex}
 
%---------------------------7---------------
\vspace{.2cm} \label{buffo} 
\begin{ex}{\bf :} \fbox{$ \R \times_{\Z} \widetilde{\SL}_2(\R) $}

\vspace{.2cm}
\noindent 
We now describe a small modification of \cite[Example 2.10]{us1} and \cite[Example 3.1.7]{Conversano-thesis}. The idea is to define a semialgebraic group that is isomorphic to the amalgamated direct product of $(\R, +)$ and the universal cover of $\SL_2(\R)$ over the common subgroup $(\Z, +) \cong Z(\widetilde{\SL}_2(\R))$.

Let $\pi \colon \widetilde{\SL}_2(\R) \to \SL_2(\R)$ be the universal cover of $\SL_2(\R)$ 
%(which is also the o-minimal universal cover of $PSL(2, \R)$) 
and fix $s \colon \SL_2(\R) \to \widetilde{\SL}_2(\R)$ a definable section. Recall that $\widetilde{\SL}_2(\R)$ is a semisimple Lie group with infinite center $Z(\widetilde{\SL}_2(\R)) = \ker \pi$ and $\pi$ is a homomorphism. Thus the image of the 2-cocycle $
h_s \colon \SL_2(\R) \times \SL_2(\R)\to \widetilde{\SL}_2(\R)$ given by $h_s(x_1, x_2) = s(x_1)s(x_2)s(x_1x_2)^{-1}$ is contained in $\ker \pi$. Hrushovski, Peterzil and Pillay prove in  \cite[8.5]{HPP} that it is actually a definable map and takes only finitely many values. 
%The function $h_s(x_1, x_2) = s(x_1)s(x_2)s(x_1x_2)^{-1}$ is a definable map $PSL(2, \R) \times PSL(2, \R) \to \ker \pi$. 
Note that $\ker \pi$ is isomorphic (as an abstract group) to $(\Z, +)$. Fixed a generator $v$ of $\ker \pi$, if $h_s(x_1, x_2) = kv,\ k \in \Z$, with abuse of notation we write $h_s(x_1, x_2)$ meaning the corresponding $k \in \Z$.\\
%with abuse of notation we will write $h_s(x_1, x_2) = k \in \Z$ meaning $h_s(x_1, x_2) = kv$.  
Now let $G = \R\ \times\ \SL_2(\R)$. Fixed $a \neq 0$, consider the definable group operation on $G$ given by

$$
(t_1, x_1) \ast (t_2, x_2) = (t_1 + t_2 + h_s(x_1, x_2)a,\ x_1x_2).
$$

\vs  
Then $(G, \ast)$ is a semialgebraic group such that $[G, G]$ is not definable and with no definable Levi subgroups. 
 %($G$ is the free product of $([0, 1[, \oplus)$ and $(\widetilde{SL}_2, \cdot)$ with amalgameted subgroup $(\Z, +)$).\\
%$G = [0, 1[ \cdot \widetilde{SL}_2$ with amalgamation $[0, 1[\ \cap\ \widetilde{SL}_2 = \langle (a, I) \rangle = \langle (0, v) \rangle \cong (\Z, +)$.
To see this, note that the center of $G$ (whose connected component coincides with the solvable radical) is the subgroup $Z = Z(G) = \R\ \times\ \{\pm I\}$.
%$Z^{00} = \bigcap_{n \in \N}([0, 1/n[ \cup [1-1/n, 1[)$. We want to show that \\
%if $a \in Z^{00}$ then $G^{00} = Z^{00} \cap PSL(2, R)$, if $a \not \in Z^{00}$ then $G = G^{00}$.\\
%\begin{claim} 
%If $G$ is a definable group such that $G = G^{00}$, then the o-minimal universal cover $\widetilde{G}$ of $G$ has no type-definable subgroups of bounded index.
%\end{claim}
%
%\begin{proof}
%Let $\pi \colon \widetilde{G} \to G$ the universal cover map. Suppose $\widetilde{X} = \bigcap_{i \in I}X_i$ a type definable subgroup of bounded index in $\widetilde{X}$. $\pi(\widetilde{X}) = \bigcap_{i \in I} \pi(X_i)$ is a type-definable (the universal cover map is definable restricted to every definable set) subgroup ($\pi$ is an homomorphism) of bounded index ($\pi$ is surjective and $\widetilde{X}$ is of bounded index in $\widetilde{G}$). Then $\pi(\widetilde{X}) = G$.  
%\end{proof}
We can identify the subgroup $\langle a \rangle \times\ \SL_2(\R)$ with $\widetilde{\SL}_2(\R)$, which is isomorphic to it by construction. 
Every element $g$ of $G$ is a product $g = zx$, for some $z \in Z$ and $x \in \widetilde{\SL}_2(\R)$.
Therefore every commutator of $G$ is a commutator of $\widetilde{\SL}_2(\R)$, and $[G, G] = \widetilde{\SL}_2(\R)$ is the unique Levi subgroup of $G$.
%: $[g_1, g_2] = [z_1x_1, z_2x_2] = z_1x_1z_2x_2x_1^{-1}z_1^{-1}x_2^{-1}z_2^{-1} = x_1x_2x_1^{-1}x_2^{-1} = [x_1, x_2]$. Thus $[G, G] = [\widetilde{SL}_2, \widetilde{SL}_2] = \widetilde{SL}_2$, since every semisimple Lie group is equal to its derived subgroup. But $\widetilde{SL}_2$ is not definable in $\bar{\R}$ (it is a semisimple group with infinite center), then $[G, G]$ is not definable. Observe that for every semisimple group $S$ such that $G = ZS$ then $[G, G] = [S, S] = S$. Therefore such an $S$ can not be definable in $\bar{\R}$.  
However, $G/\mathcal{N}(G) = \SL_2(\R)$, so $\bar{G} = [\bar{G}, \bar{G}]$.
\end{ex}

%-----------------------------8-------------
\vspace{.2cm}
\begin{ex}{\bf :}  \fbox{$ \SO_2(\Rs) \times_{\Z} \widetilde{\SL}_2(\Rs) $, $\Z \subset \SO_2^{00}(\Rs)$}

\vspace{.2cm}
Let us now consider the semialgebraic group $G$ in a suffiently saturated elementary extension $\Rs$ of the reals, obtained as the previous example by replacing $\R$ with $\SO_2(\Rs)$ and by taking $a \in \SO_2(\Rs)^{00}$, so that $\mathcal{N}(G) = \{e\}$ and $G = \bar{G}$. 

As before, $[G, G] = \langle a \rangle \times\ \SL_2(\Rs)$ is not definable, and $G^{00} = G^{000} = \SO_2(\Rs)^{00} \times \SL_2(\Rs)$. Therefore $G/G^{00} \cong \SO_2(\Rs)/\SO_2^{00}(\Rs) \cong C/C^{00}$.
\end{ex}

%========================
 \vspace{.2cm}
\begin{ex}{\bf :} \fbox{$ \UT_3(\R) \times_{\Z} \widetilde{\SL}_2(\R) $}

\vspace{.2cm}
Let $\UT_3(\R)$ be the group of real unipotent matrices (that is, upper triangular matrices with 1's on the diagonal) $3 \times 3$. Given $h_s \colon \SL_2(\R) \times \SL_2(\R)\to \widetilde{\SL}_2(\R)$ as in Example 5.7, one can define on $G = \UT_3(\R) \times \SL_2(\R)$  the following   group operation:

\[
(A, X) \ast (B, Y) = \left (AB + \begin{pmatrix}
0 & 0 & h_s(X, Y) \\
0 & 0 & 0 \\
0 & 0 & 0
\end{pmatrix}, XY
\right )
\]

\vs \noindent
so that $Z(G)^0 = Z(\UT_3(\R)) \cong (\R, +)$ and $H = \Z \times \SL_2(\R) \cong  \widetilde{\SL}_2(\R)\, \lhd G$ is the unique Levi subgroup of $G$. 
Note that $[G, G] = Z(G)^0 \times \SL_2(\R)$ is isomorphic to Example 5.7.
Therefore the commutator subgroup of $G$ is definable, while Levi subgroups are not. 

 In \cite{CM} it is proved that the nilpotent Lie group $G/H$ interprets the real field expanded with a predicate for the integers, and therefore it interprets every real Lie group.
 \end{ex}

%------------------------------------------
 
\vs

\end{document}